\def\bkC{{\rm \kern.20em \vrule width.05em height1.5ex depth-.03ex \kern-.27em C}}
\def\C{\bkC}
\def\bksC{{\rm \kern.24em \vrule width.05em height1ex depth-.05ex 
\kern-.26em C}}
\def\bkE{{\rm I\kern-.22em E}}
\def\bkH{{\rm I\kern-.22em H}}
\def\H{\bkH} 
\def\bkN{{\rm I\kern-.17em N}}
\def\bkQ{{\rm \kern.24em \vrule width.05em height1.4ex depth-.05ex 
\kern-.26em Q}}
\def\bkR{{\rm I\kern-.17em R}}
\def\R{\bkR}
\def\bkZ{{\rm Z\kern-.28em Z}}
\DeclareMathOperator{\dev}{dev}
\DeclareMathOperator{\PGL}{PGL}
\DeclareMathOperator{\GL}{GL}
\DeclareMathOperator{\SL}{SL}
\DeclareMathOperator{\Mod}{Mod}
\DeclareMathOperator{\Alt}{Alt}
\DeclareMathOperator{\RP3}{\R P^3}
\DeclareMathOperator{\tr}{tr}
\DeclareMathOperator{\diag}{diag}
\theoremstyle{plain}
\newtheorem{theorem}{Theorem}
\newtheorem{lemma}[theorem]{Lemma}
\theoremstyle{definition}
\newtheorem{definition}[theorem]{Definition}
\newtheorem*{definition*}{Definition}
\theoremstyle{remark}
\newtheorem{remark}[theorem]{Remark}
\numberwithin{equation}{section}
\begin{document}

\title{Projective structures on a hyperbolic 3--orbifold}
\author{Joan Porti and Stephan Tillmann}

\begin{abstract} 
We compute and analyse the moduli space of those real projective structures on a hyperbolic 3--orbifold that are modelled on a single ideal tetrahedron in projective space. Parameterisations are given in terms of classical invariants, \emph{traces}, and geometric invariants, \emph{cross ratios}.
\end{abstract}

\primaryclass{57M25, 57N10}
\keywords{3--orbifold, projective geometry}
\makeshorttitle


\section{Introduction}

This note studies certain real projective structures on a 3-dimensional orbifold, 
$O,$ which is obtained by taking the one tetrahedron triangulation with two vertices of 
$S^3,$ deleting the vertices and modelling the edge neighbourhoods on $\R^3/\langle r \rangle,$ 
where $r$ is a rotation by $120^\circ$ (see Figure~\ref{fig:m137_deg}). This orbifold supports a 
unique complete hyperbolic structure; this has two Euclidean $(3,3,3)$--turnover cusps and is of finite volume.

In \cite{CLT0}, the philosophy was put forward that strictly convex projective manifolds behave 
like hyperbolic manifolds sans Mostow rigidity. This paper computes a moduli space of projective 
structures on $O$ that are modelled on an ideal tetrahedron. This moduli space, denoted $\Mod(O),$ 
turns out to be a smooth, open 2--dimensional disc.
We obtain two parameterisations of $\Mod(O)$: one in terms of algebraic invariants, \emph{traces}, and
one in terms of geometric invariants, \emph{cross-ratios.} 

The complete hyperbolic structure on $O$ is singled out as the only structure on $\Mod(O)$ having \emph{standard cusps}, 
whilst the remaining structures on $\Mod(O)$ all have \emph{generalised cusps}. It is also characterised as the unique 
fixed point of a natural involution on $\Mod(O).$ The problem to decide which of the structures on $\Mod(O)$ are \emph{properly convex} 
appears to be difficult by elementary means, but is completely solved by the theoretical results of Cooper, Long and Tillmann~\cite{CLT0, CLT1},
and  of Ballas, Danciger and Lee \cite{BallasDancigerLee}; 
the answer is \emph{all of them}. This problem has also motivated some of Choi's work~\cite{Choi2}.



{\bf Acknowledgements.}
We thank the anonymous referee for suggestions that improved the paper. 
Research of the first author is supported by FEDER-MEC (grant number PGC2018-095998-B-I00).
Research of the second author is supported by an Australian Research Council Future Fellowship (project number FT170100316).


\section{Projective structures modelled on triangulations}

In $n$--dimensional real projective space, $n$--simplices are overly congruent. Given any two $n$--simplices, 
there is a projective transformation taking one to the other. Given an $n$--simplex, there is a $n$--dimensional 
family of projective transformations taking it to itself whilst fixing each of its vertices. The following notions 
can be defined in all dimensions, but we restrict to the case $n=3.$

The space $\RP3$ will be viewed as the set of 1--dimensional vector subspaces of $\R^4$ with the induced topology. 
The set of projective transformations, $\PGL(4, \R),$ then corresponds to the quotient of $\GL(4,\R)$ by its centre, 
the group of all non-zero multiples of the identity matrix. If $\Delta$ is the 3--simplex with vertices corresponding 
to the standard unit vectors $e_1,\ldots, e_4$ in $\R^4$ and containing $\sum e_i$ in its interior, then the family of 
projective transformations stabilising $\Delta$ and fixing its vertices corresponds to the set of diagonal matrices in 
$\GL(4,\R)$ having all entries positive or all entries negative. This gives a 3--dimensional family of projective transformations.

Let $M$ be an arbitrary, ideally triangulated 3--orbifold of finite topological type. We recall that an ideal
3--simplex is a 3--simplex with its four vertices removed, and an ideal triangulation of $M$ is 
 an expression of $M$ as a \emph{finite} collection of ideal 3--simplices with their faces glued in pairs, so that the 
branching locus of the orbifold is contained in the 2--skeleton. We assume that the interior of each ideal
$i$--simplex is embedded and that $M$ has finitely many ends, each
homeomorphic to the product of a 2-orbifold with a half line $[0+\infty)$.  The 
end-compactification $\overline{M}$  of $M$ is the result of adding the vertices of the ideal $3$--simplices,
topologically it is  the one-point compactification of each end of $M$.

A real projective structure on $M$ is (an equivalence class of) a pair $(\dev, \rho),$ where $\dev \co \widetilde{M} \to \RP3$ 
is a locally injective map and $\rho \co \pi_1(M) \to \PGL(4,\R)$ is a representation of the orbifold fundamental group which makes $\dev$ 
$\rho$-equivariant. 
Since the orbifold universal cover $\widetilde{M}$ is non-compact, some of its ends may be homeomorphic to $\R^2 \times (0,1).$ We therefore make some additional assumptions.

\emph{1. Structure is modelled on projective simplices:} 

Let $\widetilde{M}$ be the orbifold universal cover of $M$. Assume that $M$ is a good orbifold, so that
$\widetilde{M}$ is a manifold.
The ideal simplices of $M$ lift to
$\widetilde{M}$. Let  $\widehat{M}$ be the result of adding the vertices to the lifted simplices, with the corresponding identifications induced by side parings. 
For instance, for an ideally triangulated hyperbolic 3-orbifold of finite volume, $\widetilde{M}$ is the hyperbolic $3$-space  
$\H^3$ and 
$\widehat{M}$ consists in adding the set of points in the ideal boundary $\partial_{\infty} \H^3\cong S^2$
fixed by parabolic isometries, which is a dense countable subset of $\partial_{\infty} \H^3$.
In general, the set $\widehat{M}\setminus \widetilde{M}$ is a countable set of points
in which the triangulation is not locally finite. Furthermore, the 
end-compactification $\overline{M}$  of $M$ 
is the quotient of  $\widehat{M}$ by the action of the orbifold fundamental group of $M$.


%
%

%
%

\begin{center}
\emph{We assume that $\dev \co \widetilde{M} \to \RP3$ extends to a continuous, equivariant map $\widehat{\dev} \co \widehat{M} \to \RP3.$}
\end{center}

In this case, $\widehat{\dev}$ is $\rho_0$--equivariantly homotopic 
to a map $\widehat{\dev}_0$ with the property that $\widehat{\dev}_0(\Delta)$ 
is a projective simplex (of dimension 0, 1, 2 or 3) for every simplex $\Delta$ in $\widehat{M}$. 
In particular, the map is possibly not locally injective.

\emph{2. Structure is non-collapsed:} Let $\Delta$ a 3--simplex in $\widehat{M}.$ 
\begin{center}
\emph{We assume that the images of the vertices of $\Delta$ under $\widehat{\dev}$ are in general position.}
\end{center}

In this case, the above homotopy  between $\widehat{\dev}_0$ and $\widehat{\dev}$ can be assumed to preserve 
$\widetilde{M},$ and $\widehat{\dev}_0$ maps any simplex to a simplex of the same dimension. Moreover, it may be assumed to do so by a linear map and in particular, it is locally injective at interior points of 3--simplices.

\begin{definition}
Let $M$ be an ideally triangulated 3--orbifold with the property that the ideal triangulation restricts to an ideal triangulation of the singular locus. A \emph{real projective structure modelled on the triangulation} is a real projective structure $(\dev, \rho)$ on $M$ which is modelled on projective simplices and non-collapsed. If the triangulation consists of a single 3--simplex, we will also say that the structure is modelled on a 3--simplex.
\end{definition}


\section{The moduli space}

\begin{figure}[h]
  \begin{center}
    \includegraphics[height=3.6cm]{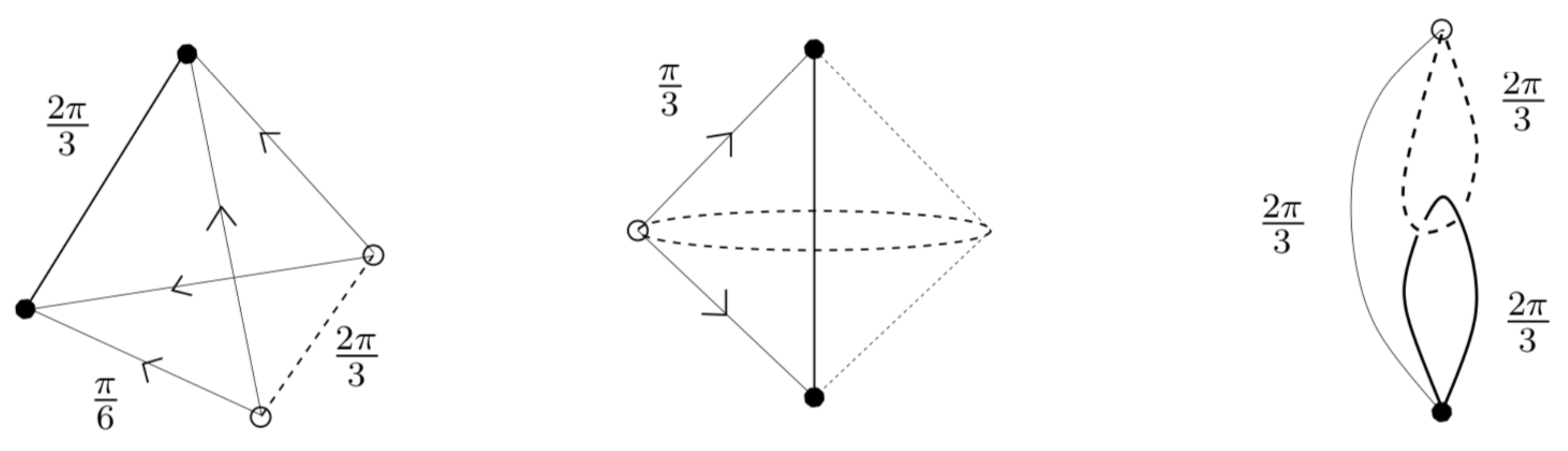}
  \end{center}
  \caption[Triangulation of $O$]{\small{To obtain $O,$ first glue the faces meeting along one of the edges with cone angle $2\pi/3$ to obtain a spindle, and then identify the boundary discs of the spindle. The result is $S^3$ minus two points, with the labelled graph (minus its vertices) as the singular locus. The hyperbolic structure can be obtained by identifying the ideal 3--simplex with an ideal hyperbolic 3--simplex with shape parameter $\frac{1}{2} + \frac{\sqrt{-3}}{6}.$ The fundamental group of $O$ admits, up to conjugation, exactly two irreducible representations into $\SL(2,\C).$ They are complex conjugates and correspond to holonomies for the hyperbolic structure.}}
  \label{fig:m137_deg}
\end{figure}

\begin{theorem}\label{thm:structures on O}
The set of real projective structures on $O$ modelled on a 3--simplex is parameterised by 
a connected component, $X$, 
of the set  of all $(w,x,y,z)\in \R^4$ subject to the following two equations:
\begin{align}\label{eqn: def rel in wxyz}
w+x+y+z&=3+wy,\\
\label{eqn: def rel in wxyz2}
wy&=zx.
\end{align}
The structures corresponding to any two distinct points of $X$ are neither 
isotopic nor projectively equivalent. 
Moreover, $X$ is diffeomorphic to an open disc. 

The
involution $(w,x,y,z) \to (y,z,w,x)$ 
on $X$ has exactly one fixed point, $(3,3,3,3) \in X$, that
corresponds to the complete hyperbolic structure on $O$. 
\end{theorem}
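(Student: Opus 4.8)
The plan is to recognise the moduli space as the common zero set $V\subset\R^4$ of $F_1=w+x+y+z-3-wy$ and $F_2=wy-zx$, to single out $X$ as a specific connected component, and then to analyse $V$ by hand; the geometric input and the topology of $V$ separate cleanly.

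\emph{Stage 1 (parameterisation).} I would fix the developing image of one lift of the ideal $3$--simplex to be the standard projective simplex $\Delta$ with vertices $e_1,\dots,e_4$. By the discussion preceding the Definition, a non--collapsed structure modelled on $\Delta$ is determined by the two face--pairing transformations in $\PGL(4,\R)$, each carrying a face of $\Delta$ to a face and respecting the vertex data, with the residual freedom given by the $3$--parameter group of positive diagonal matrices at each vertex. To each edge class of the one--tetrahedron triangulation of $O$ I would attach a projective cross--ratio (equivalently, after a short computation, a trace of the holonomy around that edge), and take $(w,x,y,z)$ to be the four invariants that survive the $120^\circ$ orbifold symmetry. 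The requirement that the holonomy around each singular edge be conjugate to a rotation of order three, together with composing the face--pairings around the edge classes, yields the \emph{gluing equations}, which I claim collapse to exactly \eqref{eqn: def rel in wxyz} and \eqref{eqn: def rel in wxyz2}. This is the step I expect to be the main obstacle: correctly setting up the projective gluing equations for the orbifold while respecting the cone structure and the standing assumption that $\widehat{\dev}$ extends continuously and is non--collapsed, and, conversely, verifying that every point of the relevant component reconstructs a genuine such structure. Injectivity of the assignment (structure)$\mapsto(w,x,y,z)$, together with the manifest invariance of cross--ratios and traces under isotopy and projective equivalence, then gives that distinct points of $X$ yield structures which are neither isotopic nor projectively equivalent; I would take $X$ to be the component containing the complete hyperbolic structure and argue that the positivity/reality constraints on the invariants confine all structures to it.

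\emph{Stage 2 ($X$ is a disc).} Here I would work with the whole variety $V$. The substitution $a=w-1,\ b=y-1,\ c=x-1,\ d=z-1$ turns the system into the symmetric pair
\begin{align}
ab&=c+d,\\
cd&=a+b.
\end{align}
For fixed $(a,c)$ these are linear in $(b,d)$,
\begin{equation}
\begin{pmatrix} a & -1\\ -1 & c\end{pmatrix}\begin{pmatrix} b\\ d\end{pmatrix}=\begin{pmatrix} c\\ a\end{pmatrix},
\end{equation}
with determinant $ac-1$, so off the locus $ac=1$ one solves uniquely $b=\frac{c^{2}+a}{ac-1}$, $d=\frac{a^{2}+c}{ac-1}$, a smooth rational section. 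Let $U_+=\{(a,c):a>0,\ ac>1\}$ and let $g\co U_+\to\R^4$ be the resulting map (with $w=a+1,$ etc.). Then $g$ is an injective immersion, since its first and third coordinates recover $(a,c)$, and it is proper because $b$ or $d$ blows up as $(a,c)$ approaches $\partial U_+$ (either $ac\to 1^{+}$, or $a\to 0^{+}$ forcing $c\to\infty$). Hence $g$ is a diffeomorphism onto a subset that is both open and closed, that is, onto a connected component of $V$, which I identify with $X$ since $(3,3,3,3)=g(2,2)$; in particular this chart shows $X$ is smooth without a separate Jacobian computation. Finally $U_+$ is the region above the graph of the convex function $a\mapsto 1/a$ on $(0,\infty)$, hence convex, hence diffeomorphic to an open disc, and this transports to $X$.

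\emph{Stage 3 (the involution).} In the coordinates $(a,b,c,d)$ the involution becomes $(a,b,c,d)\mapsto(b,a,d,c)$, so its fixed points satisfy $a=b$ and $c=d$; the equations then force $a^{2}=2c$ and $c^{2}=2a$, whence $(a-c)(a+c+2)=0$, and since $a+c=-2$ has no real solution we get $a=c\in\{0,2\}$. These are the points $(1,1,1,1)$ and $(3,3,3,3)$ of $V$. Under $g$ the component $X$ is precisely the set of points with $(a,c)\in U_+$; as $(0,0)\notin U_+$ the point $(1,1,1,1)$ lies in a different component, whereas $(2,2)\in U_+$, so $(3,3,3,3)$ is the unique fixed point in $X$. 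To conclude I would confirm that $(3,3,3,3)$ is the complete hyperbolic structure by evaluating the chosen cross--ratios on the ideal hyperbolic simplex with shape parameter $\tfrac12+\tfrac{\sqrt{-3}}6$ and checking that they all equal $3$.
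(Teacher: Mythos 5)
Your Stage 1 is where the theorem actually lives, and it is the part you have not done. Deriving equations \eqref{eqn: def rel in wxyz}--\eqref{eqn: def rel in wxyz2} is not a routine ``gluing equations'' translation: in the paper it requires writing the face pairings as explicit matrices $A,B$, normalising via $A^3=B^3=I_4$ and conjugation, analysing $(ABA^{-1}B^{-1})^3=cI_4$ case by case (one degenerate case gives a collapsed structure and must be excluded), determining that the residual conjugation freedom is exactly the action of $M=\diag(m,m,m^{-1},m^{-1})$, and verifying that the trace coordinates $w=2+\tr AB$, etc.\ realise the quotient of that action as the zero set of the two equations. More seriously, your proposed mechanism for discarding the rest of the variety --- ``positivity/reality constraints on the invariants confine all structures to $X$'' --- is wrong in spirit: the second component of the variety consists of genuine real representations satisfying all the order-three relations (for instance $(w,x,y,z)=(1,1,1,1)$, realised by $a_1=a_2=b_3=b_4=1$, where the holonomy group acts as $\Alt(5)$ on five points of $\RP3$). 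What fails there is not reality or positivity of any invariant but \emph{local injectivity of the developing map around the axis of the commutator} $C=ABA^{-1}B^{-1}$: the cycle of $12$ translates of $\Delta$ around that edge closes up early ($CAB(\Delta)=\Delta$ at $(1,1,1,1)$), so $\dev$ is a branched cover of degree $2$ there. The paper's key observation --- that this winding degree is locally constant on each component of the parameter space, so it suffices to check it at one point per component ($(3,3,3,3)$ gives degree $1$, $(1,1,1,1)$ gives degree $\geq 2$) --- has no counterpart in your proposal, so the claim that the structures are parameterised by a single component is not established. Relatedly, once you do need this argument, you also need to know how many components the complement of $X$ has (the paper's Lemma~\ref{lemma:components}: exactly one, also a disc), which your Stage 2 does not address.

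That said, your Stages 2 and 3 are correct and genuinely nicer than the paper's treatment of the variety. The symmetric substitution turning the system into $ab=c+d$, $cd=a+b$, the explicit rational section $b=(c^2+a)/(ac-1)$, $d=(a^2+c)/(ac-1)$ over the convex region $U_+=\{a>0,\ ac>1\}$, and the properness argument identify $X$ as a smooth component diffeomorphic to a disc in one stroke, avoiding the paper's discriminant and branched-covering analysis in Lemma~\ref{lemma:components}; your fixed-point computation for the involution (fixed points $(1,1,1,1)$ and $(3,3,3,3)$ in the variety, only the latter in $X$) is complete, and the paper leaves exactly this computation implicit. If you supply the missing representation-theoretic derivation and the local-injectivity/degree argument from the paper's proof, your Stages 2--3 could replace Lemma~\ref{lemma:components} for the purposes of describing $X$ itself --- but you would still need the paper's count of components of the full variety (or check the degree at a point of every component of the complement of $X$) to finish.
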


Before proving the theorem, we describe the zero set of \eqref{eqn: def rel in wxyz} and \eqref{eqn: def rel in wxyz2}.

\begin{lemma}\label{lemma:components}
The zero set  of \eqref{eqn: def rel in wxyz} and \eqref{eqn: def rel in wxyz2} in $\R^4$
is a smooth surface with two components, each diffeomorphic to a disc. One of the components, $X$,
contains
the point with coordinates $w=x=y=z=3$, and it is distinguished from the other component by the inequalities
$x>1$ and $z>1$ (equivalently $y>1$ and $w>1$). 
\end{lemma}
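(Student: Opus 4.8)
The plan is to pass to the coordinates $a = w-1$, $b = y-1$, $c = x-1$, $d = z-1$, in which the two relations \eqref{eqn: def rel in wxyz} and \eqref{eqn: def rel in wxyz2} take the symmetric form
\[
c+d = ab, \qquad a+b = cd .
\]
(Equation \eqref{eqn: def rel in wxyz} becomes $c+d=ab$, and modulo this \eqref{eqn: def rel in wxyz2} becomes $a+b=cd$.) Smoothness of the zero set $V$ is then cheap: the Jacobian of $(c+d-ab,\,a+b-cd)$ has rows $(-b,-a,1,1)$ and $(1,1,-d,-c)$, and vanishing of the $(1,2)$ and $(3,4)$ minors forces $a=b$ and $c=d$, whence the equations read $2c=a^2$ and $2a=c^2$; multiplying gives $4ac=(ac)^2$, while vanishing of the $(1,3)$ minor demands $ac=1$, an impossibility. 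Hence the Jacobian has rank $2$ everywhere and $V$ is a smooth surface.

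Next I would exhibit $V$ as a branched double cover. For fixed $(a,b)$ the numbers $c,d$ are the roots of $T^2-(ab)T+(a+b)$, which are real precisely when the discriminant $D = a^2b^2 - 4(a+b)$ is nonnegative. So projection to the $(a,b)$--plane maps $V$ onto $R = \{D \ge 0\}$; over the interior $\{D>0\}$ it is a trivial double cover with sheets $c=\tfrac12(ab\pm\sqrt D)$ interchanged by the symmetry $(a,b,c,d)\mapsto(a,b,d,c)$, and these sheets are glued along the branch curve $\{D=0\}=\{c=d\}$. In other words $V$ is the double of the surface-with-boundary $R$ across $\partial R=\{D=0\}$, and the number and topology of the components of $V$ are governed by those of $R$.

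The crux is therefore the planar region $R$. Here I would substitute $\sigma=a+b$, $p=ab$, which presents $\R^2$ as the $2{:}1$ branched cover of $\{\sigma^2\ge 4p\}$ over the diagonal $a=b$; in these variables $\{D=0\}$ is the parabola $p^2=4\sigma$, and realisability confines it to the arcs $p\le 0$ and $p\ge 4$. Pulling back, $\{D=0\}$ should be a disjoint union of two properly embedded lines $C_1$ (through $(a,b)=(0,0)$) and $C_2$ (through $(a,b)=(2,2)$). Because $C_2$ lies inside one of the two half-planes bounded by $C_1$, the complement $\R^2\setminus(C_1\cup C_2)$ has three regions, on which $D$ is positive, negative, positive; deleting the middle region where $D<0$ leaves $R=R_1\sqcup R_2$, a disjoint union of two closed half-planes with $\partial R_i=C_i$. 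Since the double of a half-plane across its edge is $\R^2$, it follows that $V$ has exactly two components, each diffeomorphic to an open disc. I expect this to be the main obstacle: one must verify that $\{D=0\}$ genuinely splits into two disjoint lines that are stacked rather than nested or crossing, and the $(\sigma,p)$ substitution is the device that makes the quartic curve manageable by turning it into a parabola.

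It remains to identify $X$ and the inequalities. The point $(w,x,y,z)=(3,3,3,3)$ corresponds to $(a,b)=(2,2)\in C_2=\partial R_2$, so its component is $X=$ the double of $R_2$. The elimination above records the identities $(x-1)(z-1)=(w-1)+(y-1)$ and $(x-1)+(z-1)=(w-1)(y-1)$, that is $cd=a+b$ and $c+d=ab$; from these, both $x>1$ and $z>1$ (i.e. $c>0$ and $d>0$) hold if and only if $c+d=ab>0$ and $cd=a+b>0$, i.e. if and only if $a>0$ and $b>0$, i.e. if and only if $w>1$ and $y>1$. Finally a brief check---using that $\{a+b<0\}\subseteq R_1$ and that $C_1\subseteq\{ab\le 0\}$---shows $R_2\subseteq\{a>0,\,b>0\}$ whereas $R_1\cap\{a>0,\,b>0\}=\emptyset$, so these inequalities hold on all of $X$ and fail on the other component, distinguishing $X$ as claimed.
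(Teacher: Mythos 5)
Your proposal is correct and takes essentially the same route as the paper: shift the variables by $1$, realise the zero set as a $2{:}1$ cover of the planar region $\{a^2b^2-4(a+b)\ge 0\}$ branched over its boundary, show that this region is a disjoint union of two half-planes whose doubles are open discs, and use the open positive quadrant to separate the two components. Your extra devices---the explicit Jacobian rank computation for smoothness, the symmetric form $c+d=ab$, $a+b=cd$, and the substitution $(\sigma,p)=(a+b,ab)$ turning the discriminant locus into two arcs of a parabola---merely fill in, by slightly different elementary means, the steps the paper dismisses as ``can be checked'' or ``by elementary methods''.
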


\begin{proof}[Proof of the lemma]
From \eqref{eqn: def rel in wxyz} we replace $z= w\,y-w-x-y+3$ in \eqref{eqn: def rel in wxyz2}, so the set we want to describe is 
diffeomorphic
to
$$
\{(x,y,w)\in\R^3\mid -wxy+wx+wy+x^2+xy-3x=0\}.
$$
We change variables to simplify computations: set $w=a+1$, $x=b+1$, $y=c+1$. Then the subvariety becomes
$$
\{(a,b,c)\in\R^3\mid b^2-abc+a+c=0\}.
$$
We view the defining equation as a quadratic equation on $b$, whose discriminant is
$$
\mathrm{Disc}=a^2c^2-4 a-4c.
$$
It can be checked that the vanishing locus of $\mathrm{Disc}$ consist of precisely two smooth curves, each homeomorphic to $\R$
(for instance, by computing $c$ for each value of $a$, as $\mathrm{Disc}$ is quadratic on any of the variables). 
Also by elementary methods, one can show that  the set 
$\{(a,c)\in \R^2\mid \mathrm{Disc} \geq 0\}$ is homeomorphic
to two half planes.
Then the set we want to describe is the 2--to--1 branched covering of these half planes,
branched along their boundaries, and therefore the subvariety is diffeomorphic to two
smooth discs.
Furthermore the open quadrant $\{(a,b)\in\R^2\mid a> 0,\ c>0\}$ 
contains one of the components of $\mathrm{Disc}\geq 0$ and is disjoint from the other one. Finally,
the points  $(a,c)=(0,0)$ and $(a,c)=(2,2)$ lie in $\mathrm{Disc}=0$, but in different components.
\end{proof}

\begin{proof}[Proof of Theorem~\ref{thm:structures on O}]
Denote the ideal 3--simplex by $[v_1, v_2, v_3, v_4].$ 
The face pairings are $\alpha[v_1, v_2, v_3] =[v_1, v_2, v_4]$ and  $\beta[v_2, v_3, v_4] =[v_1, v_3, v_4].$ The orbifold fundamental group is generated by the face pairings, and we have the following presentation:
\begin{equation}
\pi_1^{orb}(O) = \langle \alpha, \beta : \alpha^3 = \beta^3 = (\alpha\beta\alpha^{-1}\beta^{-1})^3=1 \rangle.
\end{equation}

To determine all projective structures of $O$ modelled on a 3--simplex up to projective equivalence, 
it suffices to fix a projective 3--simplex, $\Delta$, and to 
determine all representations $\rho$ of $\pi_1(O)$ with $\rho(\alpha)$ and $\rho(\beta)$ as the corresponding face pairings. 
Here we have used that all 3--simplices are projectively equivalent. 
Choosing $\Delta= [e_1, e_2, e_3, e_4],$ the most general form of lifts of the face pairings is:
\begin{equation}
A= \begin{pmatrix} s_1 & 0 & 0 & a_1 \\
					0 & s_2 & 0 & a_2 \\
					0 & 0 & 0 & a_3\\
					0 & 0 & s_4 & a_4 \end{pmatrix} 
\quad\text{and}\quad
B = \begin{pmatrix} b_1 & t_1 & 0 & 0 \\
					b_2 & 0 & 0 & 0 \\
					b_3 & 0 & t_3 & 0\\
					b_4 & 0 & 0 & t_4 \end{pmatrix},
\end{equation}
subject to $s_1s_2s_4a_3\neq 0$ and $t_1t_3t_4b_2\neq 0.$

Since the above does not take division by the centre into account, the equation $A^3 = c I_4$ for $c \neq 0$ is projectively equivalent to $A^3 = I_4,$ since the equation $c^3 = 1$ always has a non-zero real root. Similarly for $B.$ This gives: 
\begin{equation}
A= \begin{pmatrix} 1 & 0 & 0 & a_1 \\
					0 & 1 & 0 & a_2 \\
					0 & 0 & 0 & a_3\\
					0 & 0 & -a_3^{-1} & -1 \end{pmatrix} 
\quad\text{and}\quad
B = \begin{pmatrix} -1 & -b_2^{-1} & 0 & 0 \\
					b_2 & 0 & 0 & 0 \\
					b_3 & 0 & 1 & 0\\
					b_4 & 0 & 0 & 1 \end{pmatrix},
\end{equation}
subject to $a_3b_2 \neq 0.$ Note that both matrices are elements of $\SL(4, \R).$ Since we are interested in representations up to conjugacy, one may conjugate the above to give:
\begin{equation}\label{eq:forms of A and B}
A= \begin{pmatrix} 1 & 0 & 0 & a_1 \\
					0 & 1 & 0 & a_2 \\
					0 & 0 & 0 & -1\\
					0 & 0 & 1 & -1 \end{pmatrix} 
\quad\text{and}\quad
B = \begin{pmatrix} -1 & 1 & 0 & 0 \\
					-1 & 0 & 0 & 0 \\
					b_3 & 0 & 1 & 0\\
					b_4 & 0 & 0 & 1 \end{pmatrix}.
\end{equation}
It now remains to analyse $(ABA^{-1}B^{-1})^3 = cI_4.$ With (\ref{eq:forms of A and B}), one first notes that $c=1.$ 
In particular, any representation of $\pi_1(O)$ into $\PGL(4, \R)$ lifts to a representation into $\SL(4,\R).$ 
By a case-by-case analysis, one obtains the following cases:

\emph{Case 1:} $a_1=a_2=b_3=b_4=0.$ 
In this case there is a single representation which in fact satisfies $A^3=B^3=ABA^{-1}B^{-1}=I_4.$ 
The corresponding developing map is not locally injective, and hence that there is no corresponding real projective structure.

\emph{Case 2:} One obtains a single equation:
\begin{equation}\label{eqn:def rel in ab}
(a_1 + a_2)(b_3+b_4) = 3 + a_1a_2b_3b_4.
\end{equation}
Analysis of which representations are conjugate yields that pairs $A,B$ and $A', B'$ give conjugate representations in $\PGL(4, \R)$ if and only if they are conjugate by $$M = \diag (m, m, m^{-1}, m^{-1})$$ for some $m \neq 0.$ The effect on the quadruples is: 
\begin{equation}\label{eqn: R^* action}
(a'_1, a'_2, b'_3, b'_4) = (ma_1, ma_2, m^{-1}b_3, m^{-1}b_4).
\end{equation}
Note that $M$ corresponds to a projective transformation stabilising any subsimplex of $\Delta.$ Since no face pairing is a reflection,
we need to check local injectivity at the edges of the triangulation. It follows from inspection that local injectivity at the axis 
of $A$ is equivalent to not both $a_1$ and $a_2$ to be contained in $(-\infty, 0]$, 
and local injectivity at the axis of $B$ is equivalent to not both $b_1$ and $b_2$ to be contained in $(-\infty, 0].$
Thus, the corresponding pair of equivariant map and representation, $(\dev, \rho),$ can be replaced by a projective 
structure given by $(M\circ \dev, M \circ \rho \circ M^{-1})$ with $M = \diag (m, m, m^{-1}, m^{-1})$ that is locally 
injective at both the axes of $A$ and $B$ unless ($a_1, a_2 \le 0$ and $b_1, b_2 \ge 0$) or ($a_1, a_2 \ge 0$ 
and $b_1, b_2 \le 0$). But equation (\ref{eqn:def rel in ab}) has no solutions of this form. 

It remains to check local injectivity around the axis of the commutator $C=ABA^{-1}B^{-1}$. 
The description  of $\dev$ around the axis of $C$ is obtained
by gluing translates of $\Delta$ following a cyclic order. 
Knowing that $\dev$ is locally injective on the edges of $A$ and $B$, 
we have that this process of gluing simplices around the axis of $C$ turns either once or several times:
namely either $\dev$ is locally injective or it is locally a cyclic branched covering, branched on the axis of $C$.
Moreover, since we require that $C^3=I_4$, by continuity  
the degree of this branched covering is locally constant along the parameter space. This degree is the criterion for choosing
one of the components of the parameter space and discarding the other.  
Before that,  we describe the parameter space, then we will check whether it is locally  
injective or a branched covering in a given point on each component.

The coordinates given in the statement of the theorem can be expressed in terms of classical invariants of the chosen lift of $\rho$:
\begin{align*}
w = a_1b_4 &= 2 + \tr AB,\\
x = a_1b_3 &= 2 + \tr A^{-1}B,\\
y = a_2b_3 &= 2 + \tr A^{-1}B^{-1},\\
z = a_2b_4 &= 2 + \tr AB^{-1},
\end{align*}
and (\ref{eqn:def rel in ab}) can be expressed in terms of these, giving the 
first equation given in (\ref{eqn: def rel in wxyz}). The second arises from 
$wy=a_1a_2b_3b_4=xz.$ It can now be verified that the zero set of \eqref{eqn: def rel in wxyz}
and \eqref{eqn: def rel in wxyz2} corresponds to the 
quotient of the action of $\R\setminus \{0\}$ given in (\ref{eqn: R^* action}) 
on the set of all $(a_1,a_2,b_3,b_4)$ subject to (\ref{eqn:def rel in ab}). 
We apply Lemma~\ref{lemma:components} to conclude that the parameter space has two components. 
Next we check wheter local injectivity of $\dev$ holds or not on a point on each component of this parameter space.

On the component $X$ we consider $(w,x,y,z)=(3,3,3,3)$: it is  
a straightforward computation that it corresponds to the holonomy of the hyperbolic structure.
In particular 
the devoloping map is locally injective along the axis of $C=ABA^{-1}B^{-1}$.

On the other component of the parameter space we consider 
$(w,x,y,z)=(1,1,1,1)$: 
we take $a_1=a_2=b_3=b_4=1$. We look at the cycle of consecutive translates of 
$\Delta$ than gives a full turn around
$e_1e_4$,
the axis of $C=ABA^{-1}B^{-1}$:
\begin{multline}
\label{eqn:cycle}
( \Delta, A(\Delta), AB(\Delta), ABA^{-1}(\Delta),C(\Delta),CA(\Delta), CAB(\Delta), 
CABA^{-1}(\Delta), 
\\
C^2(\Delta), BAB^{-1}(\Delta) , BA(\Delta), B(\Delta) ).
\end{multline}
This cycle has 12 elements, because  $C$ has order 3 and the edge has four representatives in $\Delta$. 
The matrix $C$ acts on this cycle by a cyclic permutation. 
By direct computation, when $a_1=a_2=b_3=b_4=1$:
$$
CAB=
\begin{pmatrix}
 0 & 1 & 0 & 0 \\
 1 & 0 & 0 & 0 \\
 0 & 0 & 0 & 1 \\
 0 & 0 & 1 & 0 
\end{pmatrix}
$$
Hence $CAB(\Delta)=\Delta$ and the cycle \eqref{eqn:cycle} turns at least twice around the edge, 
i.e.~$\dev$ is locally a branched covering of degree $\geq 2$. (We prove in the Remark~\ref{remark:1111} that the 
degree is precisely~2.) Hence we discard the whole component.

Let $X$ be the component of the parameter space containing $(w,x,y,z)=(3,3,3,3)$. We have shown that it parameterizes 
projective structures, 
we aim to show that these structures  are properly convex. For that purpose, 
 we look at the peripheral subgroups
of $\pi_1^{orb}(O)$. There are two conjugacy classes of peripheral subgroups, corresponding
to the ends of $O$ and represented respectively by the stabilizer of $v_1$ and the stabilizer of $v_4$. 
The stabilizer of $v_1$ is the group generated by $\alpha$ and $\gamma=[\alpha,\beta]$, it is the 
fundamental group of the link of the end of $O$ corresponding to
$v_1$ (which is a turnover $S^2(3,3,3)$): 
$$
\pi_1^{orb} ( S^2(3,3,3) )\cong\langle \alpha, \gamma\mid \alpha^3=\gamma^3=(\alpha^2 \gamma)^3=1\rangle.$$
Similarly, for $v_4$ the peripheral group is generated by $\beta$ and $\gamma$.
The maximal torsion-free subgroup  of the peripheral group of $v_1$ has index 3, it
is generated by $\alpha\gamma$ and $\gamma\alpha$ and it is isomorphic to $\mathbb Z^2$.

\begin{lemma}
\label{lemma:hexends}
For $(w,x,y,z)\in X\setminus\{(3,3,3,3) \}$, up to conjugation we have
$$
A=\rho(\alpha)=
\begin{pmatrix}
 0 & 0 & 1& 0 \\
 1 & 0 & 0 & 0 \\
 0 & 1 & 0 & 0 \\
 0 & 0 & 0 & 1
\end{pmatrix}
\qquad
\textrm{ and }\qquad
AC=\rho(\alpha\gamma)=
\begin{pmatrix}
 \lambda_1 & 0 & 0 & 0 \\
 0 & \lambda_2 & 0 & 0 \\
 0 & 0 & \lambda_3 & 0  \\
 0 & 0 & 0 & 1
\end{pmatrix}
$$
with $\lambda_i\in \R$, $\lambda_i>0$  and  $\lambda_1\neq \pm 1$. 
\end{lemma}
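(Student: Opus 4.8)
The plan is to use that $\langle\alpha,\gamma\rangle$ is the orientation--preserving subgroup of a Euclidean $(3,3,3)$--triangle group, so that its holonomy is an abelian group extended by an order--$3$ rotation, and to read off the eigenvalues from the equations cutting out $X$. I would first check, directly from \eqref{eq:forms of A and B}, that $A^3=I_4$ and that the characteristic polynomial of $A$ is $(\lambda-1)^2(\lambda^2+\lambda+1)$; thus $A$ is semisimple with eigenvalues $1,1,\omega,\bar\omega$ (with $\omega=e^{2\pi i/3}$), and its real canonical form agrees with that of the displayed $3$--cycle matrix, which fixes a $2$--plane pointwise and rotates a complementary plane by $2\pi/3$. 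Conjugating the representation, I may assume that $A=\rho(\alpha)$ equals the displayed matrix, whose distinguished fixed direction is $[e_4]$.

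Since $\langle\alpha,\gamma\rangle$ stabilises the vertex $v_1$, the image group $\rho(\langle\alpha,\gamma\rangle)$ fixes the common point $p=\widehat{\dev}(v_1)\in\RP3$. Because $A^3=C^3=I_4$ in $\SL(4,\R)$, both $A$ and $C=\rho(\gamma)$ act on the line $p$ by a real cube root of unity, hence by $1$; therefore $AC=\rho(\alpha\gamma)$ also fixes $p$ with eigenvalue $1$. I would choose the conjugation of the first step so that $p=[e_4]$, which is compatible because $[e_4]$ is already the fixed direction of the $3$--cycle $A$.

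Writing $T=AC=\rho(\alpha\gamma)$ and $S=CA=\rho(\gamma\alpha)$, the turnover presentation gives $\langle S,T\rangle\cong\Z^2$ and $ASA^{-1}=T$; hence $S$ and $T$ commute, share the same eigenvalues, and $A$ cyclically permutes their common eigendirections on the invariant complement of $[e_4]$. Granting that $T$ is semisimple for $(w,x,y,z)\neq(3,3,3,3)$, I would pass to a simultaneous eigenbasis adapted to the $3$--cycle, keeping $A$ in normal form while making $T=\diag(\lambda_1,\lambda_2,\lambda_3,1)$; then $\det T=1$ forces $\lambda_1\lambda_2\lambda_3=1$. One checks that the three relations $\alpha^3=\gamma^3=(\alpha^2\gamma)^3=1$ all reduce, for monomial matrices of this shape, to exactly this one identity, so nothing further is imposed on the cusp.

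It remains to pin down the $\lambda_i$ and to locate the main obstacle. I would express the coefficients of the cubic factor $(\lambda-\lambda_1)(\lambda-\lambda_2)(\lambda-\lambda_3)$ of the characteristic polynomial of $AC$ as explicit polynomials in $(w,x,y,z)$, and then invoke \eqref{eqn: def rel in wxyz}--\eqref{eqn: def rel in wxyz2} together with the inequalities $x>1$, $z>1$ (equivalently $y>1$, $w>1$) distinguishing $X$ in Lemma~\ref{lemma:components}, to conclude that the three roots are real and positive. Positivity rules out $\lambda_1=-1$, while $\lambda_1=1$ would make the $\Z^2$--holonomy virtually unipotent, corresponding to the parabolic (Euclidean) cusp of the complete hyperbolic structure, i.e.\ to the excluded point $(3,3,3,3)$; hence $\lambda_1\neq\pm1$ on $X\setminus\{(3,3,3,3)\}$. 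The genuine difficulty is precisely this reality--and--positivity step: it is not formal and requires matching the signs of the symmetric functions of the $\lambda_i$ in the coordinates to the quadrant and discriminant analysis underlying Lemma~\ref{lemma:components}, so as to see that they hold on $X$ but not on the other component. A secondary subtlety is justifying semisimplicity of $AC$ away from $(3,3,3,3)$, since a nontrivial Jordan block is exactly the degeneration occurring at the hyperbolic point.
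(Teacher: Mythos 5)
Your outline correctly identifies the overall shape of the argument --- analyse the characteristic polynomial of $AC$ in the coordinates $(w,x,y,z)$, exploit the $\Z^2\rtimes\Z/3$ structure of the turnover group, and treat $(3,3,3,3)$ as the degenerate point --- but every step that carries the actual content of the lemma is deferred rather than proved. (1) Reality of the eigenvalues: you say you ``would \dots\ conclude that the three roots are real,'' and you yourself flag this as the genuine difficulty; the paper resolves it by an explicit computation showing that, after eliminating $w$ and $z$ via \eqref{eqn: def rel in wxyz} and \eqref{eqn: def rel in wxyz2}, the discriminant of the cubic factor of the characteristic polynomial is the square of an explicit rational function of $x$ and $y$, hence nonnegative. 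Nothing in your proposal substitutes for this identity. (2) Positivity: your plan to extract it from the inequalities $x>1$, $z>1$ by a sign analysis of symmetric functions is not carried out; the paper gets it by continuity along the connected component $X$ from the point $(3,3,3,3)$, where all eigenvalues equal $1$ (the eigenvalues are real on all of $X$ and their product is $1$, so they can never cross $0$). Note that positivity genuinely fails on the other component (at $(1,1,1,1)$ the eigenvalues are $1,1,-1,-1$), so some component-specific input is unavoidable. (3) The claim $\lambda_1\neq 1$: your argument --- that $\lambda_1=1$ would make the cusp holonomy unipotent, and unipotent holonomy ``corresponds to'' the excluded point --- is circular, since the statement that $(3,3,3,3)$ is the \emph{only} point of $X$ with unipotent cusp holonomy is precisely what must be shown; the paper does this by an elementary computation comparing the coefficients of the cubic factor with those of $(\lambda-1)^3$ modulo the defining equations of $X$.

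The most serious structural gap is (4): you ``grant'' semisimplicity of $AC$ and the existence of a simultaneous eigenbasis adapted to the $3$--cycle. This cannot be granted: at $(3,3,3,3)$ itself the matrix $AC$ is a nontrivial unipotent, so diagonalisability is exactly the dichotomy at stake, and real eigenvalues alone do not imply it. The paper's treatment of this point is its main geometric idea, for which your proposal has no counterpart: choose a real eigenvalue $\lambda_1\neq 1$ of multiplicity one (replacing $\lambda_1$ by $1/\lambda_1^2$ if necessary), let $p_1$ be its eigenline and set $p_2=A(p_1)$, $p_3=A(p_2)$; commutativity within $\langle AC, CA\rangle$ shows each $p_i$ is $AC$--invariant, the relation $CA\cdot A^2CA^2\cdot AC=C^3=I$ forces $p_1,p_2,p_3$ to be distinct (else $\lambda_1^3=1$), multiplicity one forces their span $V$ to be $3$--dimensional, and finally the trivial action of the order--three elements $A$ and $C$ on the line $\R^4/V$ produces the fourth, common eigenvector with eigenvalue $1$. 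This simultaneously yields the diagonal form of $AC$ and the permutation form of $A$, whereas in your scheme the $T$--invariance of the complement $\langle e_1,e_2,e_3\rangle$ of $[e_4]$ is silently assumed. In summary, your proposal locates the difficulties accurately but overcomes none of them; as it stands it is a plan for a proof rather than a proof.
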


Assuming the lemma, we prove that the set of properly convex projective structures on $X$ is open and closed.
For openess,   the 
holonomy of $\langle \alpha\gamma,\gamma\alpha\rangle\cong\mathbb{Z}^2$
 preserves a flag 
 in $\mathbb R^4$, for any structure on $X$. We can argue similarly for $v_4$
and the peripheral subgroup generated by $\beta$ and $\gamma$. Hence, we may apply Theorem~0.2 of \cite{CLT1} to say that set of 
properly convex structures is open in $X$.
For closedness, 
we apply Theorem~1.5 of \cite{BallasDancigerLee}. Namely, 
the compact orbifolds obtained by truncating the ends have totally geodesic boundary (two turnovers).
Ballas, Danciger and Lee show in \cite{BallasDancigerLee} that 
 these structures
yield a convex structure on the double of the truncated orbifolds, and  apply a theorem of Benoist for 
convex structures on closed orbifolds
\cite{BenoistIII}. 
Alternatively, one can directly refer to Theorem 0.1 of the forthcoming work~\cite{CT2020} to obtain closedness.
Thus every structure on $X$ is properly convex. 
%
%
%
\end{proof}
 
\begin{proof}[Proof of Lemma~\ref{lemma:hexends}]
First we claim that all eigenvalues of $AC=\rho(\alpha\gamma)$ are real, for each value of the parameters $(w,x,y,z)\in X$. 
For that purpose, we compute the characteristic polynomial of $\rho(\alpha\gamma)=AC$, by a long but explicit computation:
$$
(\lambda-1)({\lambda}^{3}+ \left( -yx-w+2\,x+2\,y-z \right) {\lambda}^{2}+ \left( 
zw-2\,w+x+y-2\,z \right) \lambda-1)
$$
and we check that the discriminant of its degree three factor is nonnegative. 
To check that, we write $w$ and $z$ in terms of $x$ and $y$ from \eqref{eqn: def rel in wxyz} and \eqref{eqn: def rel in wxyz2};
this only parameterizes an open dense subset of  $X$,
but it is sufficient to determine the non-negativity of the discriminant. With this parameterization of a subset $X$, the discriminant is
$$
{\frac { \left( {y}^{2}-3\,y+3 \right) ^{2} \left( {x}^{2}-3\,x+3
 \right) ^{2} \left( -y+x \right) ^{2} \left( {x}^{2}{y}^{2}-3\,{x}^{2
}y-3\,x{y}^{2}+3\,{x}^{2}+3\,xy+3\,{y}^{2} \right) ^{2}}{ \left( xy-x-
y \right) ^{6}}},
$$
which is always nonnegative.

From the formula of the characteristic polynomial of $AC=\rho(\alpha\gamma)$, 
an elementary computation shows that for $(w,x,y,z)\in X\setminus\{(3,3,3,3)\}$, 
$AC$ has a (real) eigenvalue $\lambda_1\neq 1$. 
Up to replacing it by $1/\lambda_1^2$, we assume that $\lambda_1$ has multiplicity $1$.
Let $p_1\in\RP3$ be the projective point
fixed by 
$AC$ corresponding to $E_{\lambda_1}(AC)$,  the eigenspace of eigenvalue $\lambda_1$.
Set  $p_2=A(p_1)$ and $p_3=A(p_2)$,
we have $p_1=A(p_3)$.
By construction, 
\begin{equation}\label{eqn:eigenspaces}
p_1= E_{\lambda_1}(AC),\quad
p_2=E_{\lambda_1}(A^2CA^2),\quad
p_3=E_{\lambda_1}(CA),\quad
\end{equation}
where $E_{\lambda}$ denotes the eigenspace with eigenvalue $\lambda$.
Next we aim to show that $p_1$, $p_2$ and $p_3$ span a 3-dimensional linear subspace. 
We first show that $p_1\neq p_2$. 
Seeking a contradiction, assume $p_1=p_2$, then also $p_3=p_2=p_1$. As the product of the matrices $CA$, $A^2CA^2$, $AC$  is $1$,
\eqref{eqn:eigenspaces} would imply that $\lambda_1^3= 1$, which contradicts $\lambda_1\neq1$ and $\lambda_1\in\R$.
Let $V\subset\R^4$ be the span of the three affine lines $p_1$, $p_2$ and $p_3$. 
Commutativity between $AC$
and $CA$ implies that $AC(p_2)=p_2$ and $AC(p_3)=p_3$. Thus $V$ contains three different lines that are invariant by $AC$.
If $V$ had dimension $2$, then this would imply that  $AC$ acts as a multiple of the identity on $V$, contradicting that
$\lambda_1$ has multiplicity 1. Hence $\dim V=3$, as claimed.
 As $\dim (\R^4/V)=1$  and 
$A$ and $C$ have order three, both $A$ and $C$ act trivially on $\R^4/V$.
Thus every matrix in $\langle A, C \rangle$ has an eigenvector equal to 1. 
By looking at the action of $\langle A, C\rangle$ on  the vector subspace $V$,
one can find a matrix in $\langle AC,CA\rangle\cong\mathbb Z^2$ whose restriction to $V$
has all eigenvalues  different from $1$.
This gives the fourth vector for a basis of $\R^4$ so that the matrices with respect to these basis are as claimed.

Finally, the assertion on the sign of the eigenvalues follows from continuity by looking at the representation at 
$(w,x,y,z)=(3,3,3,3)$.
\end{proof}

\begin{remark}\label{remark:1111}
When  $(w,x,y,z)=(1,1,1,1)$,  $\langle A, B\rangle$ acts as $\Alt(5)$ by permuting five points in $\RP3$.
More precisely,  we take
 $a_1=a_2=b_3=b_4=1$, and  both $ A$ and $B$ preserve the set of five projective points
$
\{ \langle e_1\rangle , \langle e_2 \rangle , \langle e_3 \rangle ,\langle  e_ 4 \rangle ,\langle  e_1+ e_2-e_3-e_ 4\rangle  \}
$ in $\RP3$.
The action on this set determines the group action, as a projective transformation of  $\RP3$ is determined by the action on 
five points in general position. 
As both $A$ and $B$ act as cycles of order three on this set, and their fixed points are disjoint, this is the action of the alternating
group $\Alt(5)$.
It can be shown that these five points are the vertices of a tessellation of 
$\RP3$  defined by the translates of $\Delta$. 
The stabilizer of $\Delta $ has $4$ elements and the tessellation consists of 15 tetrahedra 
(each tetrahedron shares its vertices with two other tetrahedra of
the   tessellation). 
Furthermore, one can check that the edges invariant by $A$ or $B$ are adjacent to three simplices, and the edge invariant by $C$ 
is adjacent to six simplices (and therefore the developing map has degree $2=12/6$ around the edge of $C$).

The quotient of  $\RP3$ by this action of $A_5$ is the spherical orbifold $J\times^*_J J$ in \cite{Dunbar94}.
\end{remark}

\begin{remark}
By the discussion in the proof of Theorem~\ref{thm:structures on O} and Remark~\ref{remark:1111},
the component of the parameter space described by \eqref{eqn: def rel in wxyz} and \eqref{eqn: def rel in wxyz2} 
that contains $(w,x,y,z)=(1,1,1,1)$ 
corresponds to projective structures \emph{branched} along a singular edge, as described by Ballas and Casella in~\cite{BallasCasella}.
\end{remark}

\begin{remark}
The  proof of Theorem~\ref{thm:structures on O} gives an interpretation of the 
coordinates in terms of traces. We wish to point out that there is an alternative interpretation in terms of \emph{cross ratios} as follows.
We look for cross ratios of points in the projective line  
$$
l=\langle e_3,e_4\rangle $$ 
that contains the edge $e_3e_4$, namely the fixed point set of $B$.
We consider the projective plane 
$$
\Pi=\langle e_1, e_2, e_3\rangle,
$$
so that both $\Pi$ and   $A(\Pi)$ contain a face of the 3-simplex $\Delta$. The first three points of  $l$ that we consider
are the respective  intersection  of $l$ with $\Pi$, $A(\Pi)$ and  $A^2(\Pi)$. Their
 homogeneous coordinates are
$$
p_3=\Pi\cap l=[0:0:1:0],\quad p_4=A(\Pi)\cap l=[0:0:0:1] , \ \textrm{ and }\ p_5=A^2(\Pi)\cap l= [0:0:1:1].
$$
Each coordinate $x$, $y$, $z$ and $w$ appears as a cross ratio of $p_3$, $p_4$ and $p_5$ with one of the following:
\begin{align*}
  p_x & = A B^{-1}A(\Pi) \cap  l  =[0:0:1: 1-x]\\ 
  p_y & = ABA(\Pi)\cap l = [0:0: 1: 1-y], \\
  p_z & = A^{-1}B(\Pi)\cap l = [0:0:1-z:1], \\
  p_w & =  A^{-1}B^{-1}(\Pi)\cap l  = [0:0:1-w:1].
\end{align*}
Namely: 
\begin{align*}
x&= ( p_x,p_3; p_5,p_4) ,\\
y&= ( p_y,p_3; p_5,p_4), \\
z&= ( p_z,p_4; p_5,p_3) ,\\
w&= ( p_w,p_4; p_5,p_3) .
\end{align*}
\end{remark}  

\begin{remark}
Ballas and Casella~\cite{BallasCasella} study the same example but, instead of properly convex projective structures, 
they consider structures equipped with peripheral flags. Their deformation space is just a point, in contrast with this setting,
because for the generic structures the peripheral subgroups do not have invariant flags.
\end{remark}


\small


\address{Departament de Matem\`atiques, Universitat Aut\`onoma de Barcelona and CRM, 08193 Bellaterra, Spain } 
\email{porti@mat.uab.cat} 

\address{School of Mathematics and Statistics, The University of Sydney, NSW 2006, Australia} 
\email{stephan.tillmann@sydney.edu.au} 
\Addresses
                        
\end{document}